\definecolor{dmagenta}{rgb}{.4,.1,.5}
\definecolor{007}{rgb}{.0,.0,.7}
\definecolor{dred}{rgb}{.5,.0,.0}
\definecolor{dgreen}{rgb}{.0,.5,.0}
\definecolor{dblue}{rgb}{.0,.0,.5}
\definecolor{violet}{rgb}{.3,.0,.9}
\definecolor{orange}{cmyk}{0,.5,.1,.0}
\definecolor{dcyan}{cmyk}{.5,.0,.0,.0}
\definecolor{dyellow}{cmyk}{.0,.0,.5,.0}
\definecolor{cm}{cmyk}{1,.0,.0,.0}
\numberwithin{equation}{section}
\newtheorem{theorem}{Theorem}[section]
\newtheorem{lemma}{Lemma}[section]
\theoremstyle{definition}
\newtheorem{definition}{Definition}[section]
\theoremstyle{remark}
\newcommand{\cA}{\mathcal{A}}
\newcommand{\D}{\mathrm{d}}
\newcommand{\RR}{\mathbb{R}}
\newcommand{\bS}{\mathbb{S}}
\newcommand{\RN}{\mathbb{R}^N}
\newcommand{\cC}{\mathcal{C}}
\newcommand{\calG}{\mathcal{G}}
\newcommand{\cO}{\mathcal{O}}
\newcommand{\sL}{\mathscr{L}}
\newcommand{\bvnorm}[1]{[\kern-0.45ex[\kern0.1ex #1 \kern0.1ex]\kern-0.45ex]}
\newcommand{\abs}[1]{\lvert#1\rvert}
\newcommand{\infdel}{\Delta_\infty}
\title[Strong Maximum Principle and Compact Support Principle]
{A Strong Maximum Principle and a Compact Support Principle for infinity Laplacian}
\author[A. Biswas]{Anup Biswas}
\address{Indian Institute of Science Education and Research, Dr.\ Homi Bhabha Road, Pashan, Pune 411008}
\email{anup@iiserpune.ac.in}
\keywords{Infinity Laplacian, maximum principles, compact support, degenerate operator}
\subjclass[2010]{35B50, 35R45, 35J62}
\begin{document}

\begin{abstract}
In this article we find necessary and sufficient conditions for the strong maximum principle and compact support principle for non-negative solutions to the
quasilinear elliptic inequalities
$$\infdel u + G(|Du|) - f(u)\,\leq 0\quad \text{in}\; \cO,$$
and 
$$\infdel u + G(|Du|) - f(u)\,\geq 0\quad \text{in}\; \cO,$$
where $\infdel$ denotes the infinity Laplacian, $G$ is an
appropriate continuous function and $f$ is a nondecreasing, continuous
function with $f(0)=0$.
\end{abstract}

\maketitle

\section{Introduction}
The strong maximum principle of second order elliptic partial differential equations 
is due to Eberhard Hopf and it is one of the fundamental results in theory of differential equations. A very complete account of the developments in the area of maximum principles can be found in the works of Pucci and Serrin \cite{PS04,PS07},
 where a thorough discussion and a complete bibliography is presented.

In this article we are interested in maximum principles for inequalities involving
two operators $\sL_1$ and $\sL_0$ where
$$\sL_1 u=\infdel u= 
\sum_{i, j}\partial_{x_i}u\, \partial_{x_i x_j} u\, \partial_{x_j}u
\quad \text{and}\quad \sL_0 u=\frac{1}{|Du|^2}\infdel u.$$
$\sL_1$ is popularly known as the {\it infinity Laplacian} and $\sL_0$ is known
as the {\it normalized infinity Laplacian}. Though there are other variants of
infinity Laplacian operators one could consider, these two operators in particular, have received 
more attention in the literature. Infinity Laplacian was first introduced in the pioneering works of G. Aronsson \cite{AG1,AG2,AG3} and became quite popular in
the theory of partial differential equations. For more details on infinity Laplace 
operator we refer the readers to \cite{ACJ,Lindqvist}.
To introduce our problem we consider a domain $\cO$ in $\RN$. Let $v$ be a
non-negative solution to
\begin{equation}\label{E1.1}
\sL_1v - K|Dv|^3 - f(v)\leq 0 \quad \text{in}\; \cO,
\end{equation}
or 
\begin{equation}\label{E1.2}
\sL_0v - K|Dv| - f(v)\leq 0 \quad \text{in}\; \cO,
\end{equation}
where $K\geq 0$ is a constant.
By a (sub or super) solution we always mean a viscosity (sub or super) solution (see Definition~\ref{D2.1}).
In this article, $f:[0, \infty)\to [0, \infty)$ is a given continuous, non-decreasing function and $f(0)=0$.
Let $F(t)=\int_0^t f(s)\, \D{s}$. A nonnegative solution $v$ is
said to satisfy a strong maximum principle (SMP)
in $\cO$ if $v(x_0)=0$ for some $x_0\in \cO$ implies that $v\equiv 0$ in $\cO$.
We establish the following SMP for \eqref{E1.1} and \eqref{E1.2}.

\begin{theorem}\label{T1.1}
Consider the following two conditions: for some (and thus for all) $\delta>0$ we have
\begin{align}
\int_0^\delta \frac{1}{[F(s)]^{\frac{1}{4}}}\, \D{s}\,=\,\infty.\label{EL1.1A}
\\
\int_0^\delta \frac{1}{[F(s)]^{\frac{1}{2}}}\, \D{s}\,=\,\infty.\label{EL1.1B}
\end{align}
Then the following hold.
\begin{itemize}
\item[(a)] Assume \eqref{EL1.1A} holds. If $v\gneq 0$ is a solution of \eqref{E1.1}, 
then $v>0$ in $\cO$.
\item[(b)] Assume \eqref{EL1.1B} holds. If $v\gneq 0$ is a solution of 
\eqref{E1.2},
then $v>0$ in $\cO$.
\end{itemize}
\end{theorem}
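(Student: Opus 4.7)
The proof is by contradiction via a radial viscosity barrier. Suppose (a) fails: $v$ is a viscosity supersolution of \eqref{E1.1} with $v\gneq 0$ yet $v(x_0)=0$ for some $x_0\in\cO$. By continuity and connectedness of $\cO$, one produces $q_0\in\cO$ and $R>0$ with $\overline{B_R(q_0)}\subset\cO$, $v>0$ on $B_R(q_0)$, $v(x_1)=0$ for some $x_1\in\partial B_R(q_0)$, and $m_v:=\min_{\partial B_{R/2}(q_0)} v>0$; in particular $x_1$ is an interior point of $\cO$. The aim is to build a radial strict classical subsolution $w$ of \eqref{E1.1} on the annulus $A:=B_R(q_0)\setminus\overline{B_{R/2}(q_0)}$ with $w\le v$ on $\partial A$ and reaching $0$ transversally on the outer sphere, from which viscosity comparison and the viscosity supersolution inequality at $x_1$ yield a contradiction.

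For radial $w(x)=\phi(\abs{x-q_0})$ one has $\infdel w=(\phi')^2\phi''$ and $\abs{Dw}=\abs{\phi'}$, reducing the construction to solving backwards from $s=R$ the second-order ODE
\[
(\phi')^2\phi''=K\abs{\phi'}^3+f(\phi)+\eps,\qquad \phi(R)=0,\quad \phi'(R)=-\delta,
\]
with small parameters $\delta,\eps>0$. Setting $\eta=-\phi'$ and writing $\eta$ as a function $g(\phi)$ (possible while $\phi'<0$), this reduces to the first-order problem $g^3 g'=Kg^3+f(\phi)+\eps$ with $g(0)=\delta$, and the radial coordinate is recovered via $R-s=\int_0^{\phi(s)} \D{t}/g(t)$. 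The crucial ingredient is a Gronwall-type upper bound of the form $g(\phi)\le C(K)\bigl(F(\phi)^{1/4}+\phi\bigr)+\sorder(1)$ uniformly on $[0,m_v]$ as $\delta,\eps\to 0^+$, obtained by iterating the integral form $g^4=\delta^4+4K\int_0^\phi g^3+4F+4\eps\phi$. Under the Osgood hypothesis \eqref{EL1.1A}, the integral $\int_0^{m_v}\D{t}/(F(t)^{1/4}+t)$ diverges (splitting $[0,m_v]$ into the regions $F^{1/4}\le t$ and $F^{1/4}>t$, where respectively $\int \D{t}/t=\infty$ and $\int \D{t}/F^{1/4}=\infty$). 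Consequently, for $\delta,\eps$ sufficiently small one can arrange $\int_0^{m_v}\D{t}/g(t)=R/2$, giving a solution $\phi$ that is positive and strictly decreasing on $[R/2,R)$ with $\phi(R/2)\le m_v$ and $\phi(R)=0$, $\phi'(R)=-\delta<0$.

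With $w$ the strict classical subsolution so constructed, $w\le v$ holds on $\partial A$ (zero on the outer sphere, at most $m_v$ on the inner), so the viscosity comparison principle for \eqref{E1.1} established earlier in the paper yields $w\le v$ throughout $A$. Extend $\phi$ smoothly past $s=R$: because $\phi'(R)=-\delta<0$, any $C^2$ extension is negative on $(R,R+\tau)$ for some $\tau>0$, so the resulting radial function $\tilde w$ is $C^2$ near $x_1$ and satisfies $\tilde w\le v$ in a full neighborhood of $x_1$ in $\cO$ (inside $B_R(q_0)$ by comparison; outside because $\tilde w<0\le v$), with $\tilde w(x_1)=v(x_1)=0$. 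Hence $\tilde w$ is an admissible $C^2$ test function from below for the viscosity supersolution $v$ at the interior point $x_1$, so the viscosity supersolution condition forces $\infdel\tilde w(x_1)-K\abs{D\tilde w(x_1)}^3-f(\tilde w(x_1))\le 0$; but by the strict ODE the left-hand side equals $\eps>0$, a contradiction. Part (b) follows by the same scheme, with $\infdel$ replaced by $\sL_0$ and the second-order ODE replaced by $\phi''=K\abs{\phi'}+f(\phi)+\eps$, which reduces to the first-order problem $g g'=Kg+f+\eps$ with the matching bound $g(\phi)\le C(K)\bigl(F(\phi)^{1/2}+\phi\bigr)+\sorder(1)$, so that \eqref{EL1.1B} is precisely what forces $\int_0^{m_v}\D{t}/g(t)$ to diverge. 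The main technical hurdle is the uniform Gronwall-type upper bound on $g$ whose constants do not degenerate as $\delta,\eps\to 0^+$, since this is exactly the step that pins the sharp conditions \eqref{EL1.1A} and \eqref{EL1.1B} to the ODE's integral-divergence.
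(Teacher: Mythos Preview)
Your plan is correct and follows essentially the same architecture as the paper's proof: contradiction via a radial ODE barrier on an annulus, comparison (the paper's Lemma~\ref{L2.1}), and a viscosity contradiction at the touching point on the outer sphere where $\varphi'(R)<0$. The paper packages the barrier construction as a separate lemma (Lemma~\ref{L1.1}), solving $((\varphi')^3)'+K(\varphi')^3-f(\varphi)+\alpha=0$ with $\varphi(R)=0$, $\varphi'(R)=\alpha<0$, and uses a single small parameter $\alpha$ playing both the roles of your $\delta$ and $-\eps$.

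The one substantive technical difference is the energy estimate. You pass to $g(\phi)=-\varphi'$, write $g^4=\delta^4+4K\int_0^\phi g^3+4F+4\eps\phi$, and propose a Gronwall iteration to reach $g\le C(K)\bigl(F^{1/4}+\phi\bigr)+\sorder(1)$, followed by a splitting argument to show $\int_0^{m_v} dt/(F^{1/4}+t)=\infty$ under \eqref{EL1.1A}. The paper instead stays in the $t$-variable, multiplies the ODE by $\varphi'$, and applies the integrating factor $e^{\frac{4K}{3}t}$ to obtain directly $(\varphi')^4\le e^{\tilde K R/2}\bigl(\alpha^4+\tfrac{4}{3}F_\alpha(\varphi)\bigr)$ with $F_\alpha(s)=F(s)+|\alpha|s$; it then restricts to $\varphi\ge\hat\eps:=F_\alpha^{-1}(\alpha^4)$, where the bound becomes $|\varphi'|\le C\,F_\alpha(\varphi)^{1/4}$, and concludes by monotone convergence as $\alpha\to 0$. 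The paper's route is cleaner because the $K$-term is absorbed into a multiplicative constant rather than producing an additive lower-order term, so no auxiliary divergence claim for $\int dt/(F^{1/4}+t)$ is needed. Your splitting argument (``respectively $\int dt/t=\infty$ and $\int dt/F^{1/4}=\infty$'' on the two regions) is stated loosely and would require a short dichotomy to make rigorous; the integrating-factor trick sidesteps this entirely.
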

To compare the above result with the existing  results let us consider the
$p$-Laplacian operator of the form
\begin{equation}\label{p-lap}
\mathrm{div}(|Dv|^{p-2} Dv)- f(v)\leq 0\quad \text{in}\; \cO.
\end{equation}
It was proved by V\'{a}zquez in \cite{V84} that $v$ in \eqref{p-lap} satisfies a 
SMP if for some $\delta>0$ we have
\begin{equation}\label{p-con}
\int_0^\delta\frac{1}{[F(s)]^{\nicefrac{1}{p}}}\, \D{s}=\infty.
\end{equation}
It turns out that this condition is also necessary for the validity of SMP; see Benilan-Brezis-Crandall \cite{BBC} for $p=2$ and Diaz \cite{D85} for all $p>1$. These results are then 
extended by Pucci, Serrin and Zou \cite{PSZ} and
by Pucci and Serrin \cite{PS04} for operators of the form 
$$\mathrm{div}(A(|Dv|) Dv)- f(v)\leq 0\quad \text{in}\; \cO,$$
for a suitable continuous function $A$. For further developments in this direction 
we refer to the works of Felmer-Montenegro-Quaas \cite{FMQ}, Felmer-Quaas \cite{FQ02}.
Our Theorem~\ref{T1.1} extends the SMP for infinity Laplacian operators.

It is shown in \cite{PSZ} that when \eqref{p-con} fails, that is, 
$$\int_0^\delta\frac{1}{[F(s)]^{\nicefrac{1}{p}}}\, \D{s}\,<\,\infty, 
\quad \text{for some}\; \delta>0,$$
then a compact support principle (CSP) holds in the sense that any 
nonnegative solution $u$ of 
$$\mathrm{div}(|Du|^{p-2} Du)- f(u)\geq 0\quad \text{in}\; B^c(0, r)$$
which also vanishes at infinity, must vanish outside a compact set ( see also \cite{FMQ,FQ02,PS04}). 
Our next result is about CSP which states that any nonnagtive
solution of 
$$\sL_i u + G(|Du|) - f(u)\geq 0 \quad \text{in}\; \cO,$$
that vanishes at infinity, must have a compact support. Here $G:[0, \infty)\to [0, \infty)$ is a continuous, nondecreasing function with $G(0)=0$.
We prove a stronger version of the CSP where we do not
assume the solution to vanish at infinity.
%%%%%%%%%%%%%%%%%%%%%%%%%%%%%%%%%%%%%%%%%%%%%%%%%%%%%%%%%%%%%%%%%%%%%%%%%%%%%%%%%%%%%
\begin{theorem}\label{T1.2}
Suppose that $\cO$ is unbounded and $B^c(0, \hat{r})\subset \cO$ for some $\hat{r}>0$. Let 
$f(s)>0$ for $s>0$. Then the following hold.
\begin{itemize}
\item[(a)] Define $\Gamma(t)=\int_0^{2t} G(s)\D{s} + \frac{1}{4} t^4$ and assume that
\begin{equation}\label{ET2.2A0}
\int_0^1 \frac{1}{\Gamma^{-1}(F(s))}\, \D{s} < \infty.
\end{equation}
Let $u$ be a nonnegative, bounded function that solve 
\begin{equation}\label{EqnA}
\sL_1 + G(|Du|) - f(u)\geq 0 \quad \text{in}\; \cO\,.
\end{equation}
Then there exists $R>0$ such that $u(x)=0$ for $|x|\geq R$. 

\item[(b)] Define $\Gamma(t)=\int_0^{2t} G(s)\D{s} + \frac{1}{2} t^2$ and assume that
\begin{equation}\label{ET2.2B0}
\int_0^1 \frac{1}{\Gamma^{-1}(F(s))}\, \D{s} < \infty.
\end{equation}
Let $u$ be a nonnegative, bounded function that solve
\begin{equation*}
\sL_0 + G(|Du|) - f(u)\geq 0 \quad \text{in}\; \cO.
\end{equation*}
Then there exists $R>0$ such that $u(x)=0$ for $|x|\geq R$. 
\end{itemize}
\end{theorem}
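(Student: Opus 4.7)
My plan is to construct, for each upper bound $M$ on $u$, a radial compactly supported classical supersolution $W(|x|)$ of the reverse inequality
$\sL_1 W + G(|DW|) - f(W) \leq 0$
on an annulus $r_1 \leq |x| \leq R$ with $W(r_1) > M$, $W(R)=0$ and $W'(R^-)=0$. The infinity-Laplacian comparison principle then yields $u \leq W$ on $\{|x|\geq r_1\}$, so that $u\equiv 0$ outside $|x|=R$. To build $W$, I parametrize by $w=W(r)\in[0,M]$ and set $\phi(w)=-W'(r)$. The radial identity $\sL_1 W = (W')^2 W''$ converts the supersolution inequality into the first-order ODE
\begin{equation*}
\phi^3\,\frac{\D\phi}{\D w} + G(\phi) = f(w),\qquad \phi(0)=0,
\end{equation*}
whose integrated form reads $\phi(w)^4/4 + \int_0^w G(\phi(s))\,\D s = F(w)$. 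The distance for $W$ to drop from $M$ to $0$ is exactly $R - r_1 = \int_0^M \D w/\phi(w)$, so compact support reduces to this integral being finite.

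The main technical step is then to show that the solution $\phi$ of the ODE satisfies $\Gamma(\phi(w)) \gtrsim F(w)$, which after inverting the strictly increasing $\Gamma$ gives $\phi(w) \gtrsim \Gamma^{-1}(F(w))$, and then hypothesis \eqref{ET2.2A0} yields $\int_0^M \D w/\phi(w) < \infty$. I expect this comparison to be the principal obstacle. The factor of $2$ in $\Gamma(t)=\int_0^{2t}G(s)\,\D s + t^4/4$ appears engineered precisely for this estimate: the change of variable $t = 2\phi(s)$ in $\int_0^w G(\phi(s))\,\D s$, which uses $\D s = \phi^3/(f - G(\phi))\,\D\phi$, brings the upper limit to $2\phi(w)$ and, after splitting the range according to whether $G(\phi)$ is dominated by $f/2$ or not, lets one absorb $\int_0^w G(\phi(s))\,\D s$ into a combination of $\int_0^{2\phi(w)} G(s)\,\D s$ and $\phi(w)^4/4$, i.e., into $\Gamma(\phi(w))$ itself. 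In the $f$-dominated regime one directly obtains $\phi^4 \geq 2F$; in the $G$-dominated regime the bookkeeping is more delicate and is where the constant ``$2$'' is used.

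Once $W$ is built on $[r_1,R]$, I extend by $W\equiv 0$ on $\{r\geq R\}$; since $\phi(0)=0$, the matching at $r=R$ is $C^1$, and the extended $W$ remains a viscosity supersolution of the reverse inequality on $\{|x|>r_1\}$. Applying the comparison principle for $\sL_1$ with continuous gradient-dependent lower-order terms (available in the viscosity framework used elsewhere in the paper) on the bounded region $\{r_1 \leq |x| \leq R\}$, together with $u \leq W$ on $|x|=r_1$, gives $u \leq W$ throughout, hence $u(x)=0$ for $|x|\geq R$. Part (b) is entirely parallel: the normalized operator $\sL_0 W = W''$ on radial functions gives the simpler ODE $\phi\,\frac{\D\phi}{\D w} + G(\phi) = f(w)$, with primitive form $\phi^2/2 + \int_0^w G(\phi) = F(w)$, matching the part-(b) definition of $\Gamma$; the same finite-distance-to-zero argument applies under \eqref{ET2.2B0}.
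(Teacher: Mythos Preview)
Your overall strategy (build a radial, compactly supported supersolution and compare) matches the paper, but there is a genuine gap in the comparison step, and your construction is more roundabout than necessary.

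\textbf{The gap.} You propose to apply the comparison principle on the annulus $\{r_1\le |x|\le R\}$ and check only the inner boundary $|x|=r_1$, where $W(r_1)>M\ge u$. But on the outer boundary $|x|=R$ you have $W(R)=0\le u$, which is the wrong inequality; Lemma~\ref{L2.1} (the only comparison principle available, and only for bounded domains with $h>\tilde h$ strictly) cannot be invoked. Equivalently, if you try the full exterior $\{|x|>r_1\}$, the domain is unbounded and you have no control of $u$ at infinity. The paper resolves this in two steps you omit: first it proves, by a separate touching argument with the test function $\xi(x)=r^{-2}|x-x_0|^2-1$, that any bounded subsolution must satisfy $u(x)\to 0$ as $|x|\to\infty$; second, it shifts the supersolution to $v_\epsilon=v+\epsilon$, chooses $R_\epsilon$ with $u<\epsilon$ on $|x|=R_\epsilon$, applies Lemma~\ref{L2.1} on the bounded annulus $\{R<|x|<R_\epsilon\}$ (the $\epsilon$-shift also supplies the strict inequality $-\tfrac12 f(v_\epsilon)<0$ that Lemma~\ref{L2.1} requires), and finally sends $\epsilon\to 0$. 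Without the decay-at-infinity step and the $\epsilon$-shift, your argument does not close.

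\textbf{The construction.} The paper avoids your ``principal obstacle'' entirely. Rather than solving the exact radial ODE and then estimating $\Gamma(\phi(w))\gtrsim F(w)$, it \emph{defines} $\varphi$ by the implicit relation $\Gamma(\varphi'(t))=\tfrac14 F(\varphi(t))$ (equivalently $t=\int_0^{\varphi(t)}\D s/\Gamma^{-1}(\tfrac14 F(s))$) and then checks directly, using $\varphi\le\varphi'$ for $t\le 1$ and monotonicity of $G$, that this $\varphi$ satisfies the \emph{inequality} $(\varphi')^2\varphi''+G(\varphi')-\tfrac12 f(\varphi)\le 0$ with a built-in margin. The factor $2$ in $\Gamma$ is used simply via $G(\varphi')\varphi'\le\int_{\varphi'}^{2\varphi'}G\le\Gamma(\varphi')$; no change of variables or case splitting is needed. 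This is both shorter and supplies the strict margin needed later for comparison.
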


The boundedness assumption in Theorem~\ref{T1.2} can not be relaxed. For instance,
take $u(x)=e^{|x|}$, $G(s)=s^3$ and $f(s)=s^{3\alpha}$ for $\alpha\in (0, 1)$. Then an
easy calculation reveals  that for $x\neq 0$
\begin{align*}
\infdel u (x) + |Du(x)|^3 - f(u(x))= 2 e^{3|x|} - e^{3\alpha|x|}>0.
\end{align*}

Next we prove existence of a nonnegative solution with compact support. To compare
it with Theorem~\ref{T1.2} take $G(s)=Ks^3$ in Theorem~\ref{T1.3}(a) and $G(s)=K s$ in (b) below.

\begin{theorem}\label{T1.3}
Let $\cO=B^c(0, 1)$ and 
$f(s)>0$ for $s>0$. Then the following hold.
\begin{itemize}
\item[(a)] Suppose that
\begin{equation}\label{ET2.3A}
\int_0^1 \frac{1}{(F(s))^{\nicefrac{1}{4}}}\, \D{s} < \infty.
\end{equation}
Then for every $K>0$, there exists a $u\gneq 0$ with compact support satisfying
\begin{equation}\label{AB1}
\sL_1u + K |Du|^3 - f(u)= 0 \quad \text{in}\; \cO\,.
\end{equation}

\item[(b)] Suppose that
\begin{equation}\label{ET2.3B}
\int_0^1 \frac{1}{(F(s))^{\nicefrac{1}{2}}}\, \D{s} < \infty.
\end{equation}
Then for every $K>0$, there exists a $u\gneq 0$ with compact support satisfying
\begin{equation}\label{AB2}
\sL_0u + K |Du| - f(u)= 0 \quad \text{in}\; \cO\,.
\end{equation}
\end{itemize}
\end{theorem}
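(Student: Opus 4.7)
I look for a radial solution $u(x) = \phi(|x|)$ with $\phi \geq 0$ nonincreasing, supported on $[1,R]$ for some $R > 1$, and satisfying $\phi(R) = \phi'(R) = 0$. A direct computation gives $\infdel u = (\phi')^2 \phi''$ and $\sL_0 u = \phi''$ with $|Du| = |\phi'|$ for such $u$, so after fixing $\phi' \le 0$, equations \eqref{AB1} and \eqref{AB2} reduce on $(1,R)$ respectively to
$$(\phi')^2 \phi'' + K|\phi'|^3 = f(\phi) \quad \text{for (a)}, \qquad \phi'' + K|\phi'| = f(\phi) \quad \text{for (b)}.$$

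The plan is to reduce each of these second-order ODEs to a first-order autonomous one via a phase-plane substitution. For (a), set $Q(\phi) := (\phi'(r))^4/4$, regarded as a function of $\phi$; the chain rule yields $\D Q/\D\phi = (\phi')^2\phi''$ and $|\phi'|^3 = (4Q)^{3/4}$, so the equation becomes
$$\frac{\D Q}{\D \phi} + K (4Q)^{3/4} = f(\phi), \qquad Q(0) = 0.$$
A nonnegative solution exists on a maximal interval $[0,T_{\max})$ by a Peano-type approximation; integration yields the easy bound $Q(\phi) \leq F(\phi)$. The matching lower bound $Q(\phi) \geq F(\phi)/2$ on some $[0,T_0]$ follows from
$$Q(\phi) \geq F(\phi) - K\int_0^\phi (4F(s))^{3/4}\,\D{s} \geq F(\phi) - K\cdot 4^{3/4}\,\phi\, F(\phi)^{3/4},$$
together with the observation that \eqref{ET2.3A} and the monotonicity of $F^{-1/4}$ force $\phi\, F(\phi)^{-1/4} \leq \int_0^\phi F(s)^{-1/4}\,\D{s} \to 0$ as $\phi \to 0$.

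With $Q$ in hand, I recover $\phi$ from $\phi'(r) = -(4Q(\phi(r)))^{1/4}$, which integrates to the distance relation
$$R - r = \int_0^{\phi(r)} \frac{\D{s}}{(4Q(s))^{1/4}}.$$
The lower bound on $Q$ together with \eqref{ET2.3A} makes the integrand integrable near $0$; choosing $T \in (0, T_0]$ and setting $R := 1 + \int_0^T \D{s}/(4Q(s))^{1/4}$ produces $\phi$ on $[1,R]$ with $\phi(1) = T > 0$ and $\phi(R) = \phi'(R) = 0$, extended by zero on $[R,\infty)$ to a $C^1$ function of compact support. On $1 < |x| < R$, $u(x) = \phi(|x|)$ solves \eqref{AB1} classically; on $|x| > R$ it does so trivially. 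At the sphere $|x| = R$ both $u$ and $Du$ vanish, and a short argument using $u \geq 0$ shows that any smooth test function $\psi$ touching $u$ from above or below must itself satisfy $D\psi = 0$ at the contact point, so each of the sub- and supersolution inequalities collapses to the trivial $0 \leq 0$ (resp.\ $0 \geq 0$). Part (b) proceeds in complete analogy, with $Q(\phi) = (\phi'(r))^2/2$, reduced ODE $\D Q/\D\phi + K\sqrt{2Q} = f$, $Q(0)=0$, and reconstruction $R - r = \int_0^{\phi(r)}\D{s}/\sqrt{2Q(s)}$, convergent by \eqref{ET2.3B}.

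The principal technical point is the lower bound $Q \gtrsim F$ near $\phi = 0$: the reduced ODE is not Lipschitz at $Q = 0$, precluding a direct comparison argument, and it is precisely the CSP-type integrability hypothesis that forces the nonlinear feedback $K(4Q)^{3/4}$ (resp.\ $K\sqrt{2Q}$ in part (b)) to be dominated by $f$ near the origin, thereby underpinning both the existence of the phase-plane profile and the finiteness of the reconstruction integral.
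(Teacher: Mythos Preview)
Your argument is correct and takes a genuinely different route from the paper. Both proofs seek a radial profile $\phi$ on an interval $[R-\delta,R]$ (after translation) with $\phi(R)=\phi'(R)=0$ solving the one-dimensional ODE, and both verify the viscosity solution property on the sphere $|x|=R$ via the $C^1$ regularity of the extension by zero. The divergence is in how the profile is produced. The paper first builds an auxiliary solution $\varphi$ of the \emph{unperturbed} equation $(\varphi')^2\varphi'' = \kappa f(\varphi)$ directly from the convergent integral \eqref{ET2.3A}, then sets up an integral operator $T$ (incorporating the $K$-term through an exponential weight) and applies the Schauder fixed point theorem on a small interval, using $\varphi$ as a lower barrier to guarantee the fixed point $\psi$ is nontrivial. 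You instead pass to the phase plane, reducing to the first-order scalar ODE $Q' + K(4Q)^{3/4} = f$, $Q(0)=0$, and obtain two-sided bounds $\tfrac12 F \le Q \le F$ near $0$ by a direct integral estimate whose key step---that $\phi\,F(\phi)^{-1/4}\to 0$---is forced precisely by \eqref{ET2.3A}.

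Your approach is more elementary (only Peano existence, no fixed point theorem) and makes the role of the integrability hypothesis especially transparent: it simultaneously controls the sublinear feedback $K(4Q)^{3/4}$ and guarantees convergence of the reconstruction integral. The paper's fixed-point framework is somewhat heavier but arguably more modular, since the barrier $\varphi$ and the operator $T$ decouple the ``free'' profile from the perturbation. One small remark: in the supersolution check at $|x|=R$ you should lean on the $C^1$ regularity of $u$ (which you do note: ``both $u$ and $Du$ vanish'') rather than on $u\ge 0$; the latter handles only the subsolution direction.
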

 We note that the solution $u$ of \eqref{AB1} (\eqref{AB2}) also satisfies
$$\sL_1u  - f(u)\leq 0 \quad \text{in}\; \cO, 
(\sL_0 u - f(u)\leq  0 \quad \text{in}\; \cO, \text{respectively}).$$
Thus Theorem~\ref{T1.3} also establishes the necessity of the conditions \eqref{EL1.1A} and \eqref{EL1.1B} in Theorem~\ref{T1.1}.

Before conclude this section let us also mention the works \cite{BD99,BB01,BCPR,SP} which
also consider maximum principles for infinity Laplacian operators. However, our maximum principles are quite different from the one studied in these works.
On the other hand, though infinite
Laplacian can not be written in a divergence form, many ideas from \cite{PSZ,FMQ}
still works for our model. The proofs of our results relies on two ingredients:
the ode method of \cite{PSZ} and a new comparison theorem for infinity Laplacian
recently obtained by Biswas and Vo in \cite{BV20,BV20a}.

%%%%%%%%%%%%%%%%%%%%%%%%%%%%%%%%%%%%%%%%%%%%%%%%%%%%%%%%%%%%%%%%%%%%%%%%%%%%%%%%%%%%%%%
\section{Proofs of Theorems~\ref{T1.1},~\ref{T1.2} and \ref{T1.3}}\label{S-proof}
We provide proofs of Theorems~\ref{T1.1},~\ref{T1.2} and \ref{T1.3} in this section.
We begin with the definition of viscosity solution. Denote by
$$\widehat\sL_1 u=\sL_1 u + H(x, Du), \quad \text{and}\quad
\widehat\sL_0 u =\sL_0 u + H(x, Du)\,,$$
where $H$ is a continuous function.
As mentioned before, in this article we deal with  viscosity solutions to the equations of the form
\begin{equation}\label{E2.1}
\widehat\sL_i u + \ell(x, u)\,=\, 0\quad \text{in}\;  \mathcal{O}, \quad \text{and}
\quad u=g\quad \text{on}\; \partial\mathcal{O},
\end{equation}
where $\ell$ and $g$ are assumed  to be continuous and $i=1,2$. For a symmetric matrix $A$ we define
$$M(A)=\max_{\abs{x}=1} \langle x, A x\rangle, \quad m(A)= \min_{\abs{x}=1} \langle x, A x\rangle.$$
The open ball of radius $r$ centered at $z$ is denoted by $B(z, r)$. We use the notation $u\prec_{z}\varphi$ when $\varphi$ touches $u$ from above exactly at the point $z$ i.e.,
for some open ball $B(z, r)$ around $z$ we have $u(y)<\varphi(y)$ for $y\in B(z, r)\setminus\{z\}$ and
$u(z)=\varphi(z)$.
\begin{definition}[Viscosity solution]\label{D2.1}
An upper-semicontinuous (lower-semicontinous) function $u$ in $\bar{\mathcal{O}}$ is said to be a viscosity sub-solution (super-solution) of \eqref{E2.1}
, written as $\sL_i u + \ell(x, u)\geq 0$ ($\sL_i u + \ell(x, u)\leq 0$),   if the followings are satisfied :
\begin{itemize}
\item[(i)] $u\leq g$ on $\partial \mathcal{O}$ ($u\geq g$ on $\partial \mathcal{O}$);
\item[(ii)] if $u\prec_{x_0}\varphi$ ($\varphi\prec_{x_0} u$ ) for
some point $x_0\in\mathcal{O}$ and a $\cC^2$ test function $\varphi$, then 
\begin{align*}
& \widehat\sL_i \varphi(x_0) + \ell(x_0, u(x_0))\,\geq\, 0\,,
\quad \left(\widehat\sL_i \varphi(x_0) + \ell(x_0, u(x_0))\leq\, 0,\; resp., \right);
\end{align*}
\item[(iii)] for $i=0$, if $u\prec_{x_0}\varphi$ ($\varphi\prec_{x_0} u$) and $D\varphi(x_0)=0$ then 
\begin{align*}
& M(D^2\varphi(x_0)) + H(x,  D \varphi(x_0)) + \ell(x_0, u(x_0))\,\geq\, 0\,,
\\
&\left(m(D^2\varphi(x_0)) + H(x,  D \varphi(x_0)) + \ell(x_0, u(x_0))\leq\, 0,\; resp., \right)\,.
\end{align*}
\end{itemize}
We call $u$ a viscosity solution if it is both sub and super solution to \eqref{E2.1}.
\end{definition}
As well known, one can replace the requirement of strict maximum (or minimum) above by non-strict maximum (or minimum). We would also require the notion of superjet and subjet from \cite{CIL}. A second order \textit{superjet} of $u$ at $x_0\in\mathcal{O}$ is defined as
$$J^{2, +}_\mathcal{O} u(x_0)=\{(D\varphi(x_0), D^2\varphi(x_0))\; :\; \varphi\; \text{is}\; \cC^2\; \text{and}\; u-\varphi\; \text{has a maximum at}\; x_0\}.$$
The closure of a superjet is given by
\begin{align*}
\bar{J}^{2, +}_\mathcal{O} u(x_0)&=\Bigl\{ (p, X)\in\RN\times\bS^{d\times d}\; :\; \exists \; (p_n, X_n)\in J^{2, +}_\mathcal{O} u(x_n)\; \text{such that}
\\
&\,\qquad  (x_n, u(x_n), p_n, X_n) \to (x_0, u(x_0), p, X)\Bigr\}.
\end{align*}
Similarly, we can also define closure of a subjet, denoted by $\bar{J}^{2, -}_\mathcal{O} u$. See \cite{CIL} for more details.

Let $H:[0, \infty)\to \RR$ be a continuous function. Denote by
$\calG_i=\sL_i + H(|Du|)$. Our next ingredient is the following comparison principle which is a special case of \cite[Theorem~2.1]{BV20}.
\begin{lemma}\label{L2.1}
Let $\mathcal{O}$ be a bounded domain and $h, \tilde{h}:\mathcal{O}\to \RR$ be 
continuous functions with $h>\tilde{h}$ in $\cO$. Suppose that 
$\calG_i v - f(v)\leq \tilde{h}$ in $\mathcal{O}$ 
and $\calG_i u - f(u)\geq h$ in $\mathcal{O}$. Then $v\geq u$ on $\partial \mathcal{O}$
implies $v\geq u$ in $\bar{\mathcal{O}}$.
\end{lemma}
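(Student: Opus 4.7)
The plan is to argue by contradiction using the doubling-of-variables technique for viscosity solutions, exploiting the strict gap $h>\tilde{h}$ as slack to absorb the degeneracy of $\sL_i$. Suppose toward contradiction that $M:=\max_{\bar{\cO}}(u-v)>0$. Since $v\ge u$ on $\partial\cO$, any maximizer $\hat{x}$ lies in the open set $\cO$.

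First, for $\alpha>0$, introduce the penalized function $\Phi_\alpha(x,y):=u(x)-v(y)-\frac{\alpha}{2}|x-y|^2$ on $\bar{\cO}\times\bar{\cO}$, and let $(x_\alpha,y_\alpha)$ be a maximizer. Standard penalization estimates (see \cite{CIL}) yield $\alpha|x_\alpha-y_\alpha|^2\to 0$ and, along a subsequence, $(x_\alpha,y_\alpha)\to(x_*,x_*)$ with $x_*\in\cO$ and $(u-v)(x_*)=M$. An application of the Crandall--Ishii lemma then produces symmetric matrices $X_\alpha,Y_\alpha$ and the common covector $p_\alpha:=\alpha(x_\alpha-y_\alpha)$ with $(p_\alpha,X_\alpha)\in\bar{J}^{2,+}_\cO u(x_\alpha)$, $(p_\alpha,Y_\alpha)\in\bar{J}^{2,-}_\cO v(y_\alpha)$, and $X_\alpha\le Y_\alpha$.

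Assuming $p_\alpha\ne 0$, the sub- and super-solution inequalities from Definition~\ref{D2.1}(ii) read, for $i=1$,
$$\langle p_\alpha,X_\alpha p_\alpha\rangle+H(|p_\alpha|)\ge h(x_\alpha)+f(u(x_\alpha)),$$
$$\langle p_\alpha,Y_\alpha p_\alpha\rangle+H(|p_\alpha|)\le\tilde{h}(y_\alpha)+f(v(y_\alpha)),$$
and for $i=0$ the same inequalities with $\langle p_\alpha,\cdot\,p_\alpha\rangle$ replaced by $\langle p_\alpha,\cdot\,p_\alpha\rangle/|p_\alpha|^2$. Since $X_\alpha\le Y_\alpha$ forces $\langle p_\alpha,X_\alpha p_\alpha\rangle\le\langle p_\alpha,Y_\alpha p_\alpha\rangle$ and the $H$-terms cancel, chaining the two inequalities gives $h(x_\alpha)+f(u(x_\alpha))\le\tilde{h}(y_\alpha)+f(v(y_\alpha))$. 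Letting $\alpha\to\infty$ and using the monotonicity of $f$ together with $u(x_*)>v(x_*)$ yields $h(x_*)\le\tilde{h}(x_*)$, contradicting $h>\tilde{h}$ in $\cO$.

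The main obstacle is the degenerate case $p_\alpha=0$, especially for the normalized operator $\sL_0$ where $1/|p_\alpha|^2$ is undefined. In that event one must invoke clause (iii) of Definition~\ref{D2.1}, which replaces the quadratic forms by $M(X_\alpha)$ on the subsolution side and $m(Y_\alpha)$ on the supersolution side. Unfortunately $X_\alpha\le Y_\alpha$ only yields $M(X_\alpha)\le M(Y_\alpha)$, not $M(X_\alpha)\le m(Y_\alpha)$, so a direct subtraction does not close. This gap is bridged by refining the penalty (e.g., replacing $\frac{\alpha}{2}|x-y|^2$ by a higher power $\frac{\alpha}{q}|x-y|^q$ with $q>2$, after which the vanishing of $p_\alpha$ enforces a sharper bound on $X_\alpha-Y_\alpha$) combined with sup/inf-convolution regularization; this careful analysis is exactly what is carried out in \cite[Theorem~2.1]{BV20}, of which the present lemma is a special case.
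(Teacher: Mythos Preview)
Your approach is essentially the same doubling-of-variables argument the paper uses, and the case $p_\alpha\neq 0$ is handled identically. The difference lies in the degenerate case. You start with the quadratic penalty $\tfrac{\alpha}{2}|x-y|^2$, correctly identify that $X_\alpha\le Y_\alpha$ alone does not give $M(X_\alpha)\le m(Y_\alpha)$, and then defer the resolution to \cite{BV20}, mentioning a higher-power penalty together with sup/inf-convolutions. The paper's sketch is more explicit and simpler on exactly this point: it uses the \emph{quartic} penalty $\theta_\varepsilon(x,y)=\tfrac{1}{4\varepsilon}|x-y|^4$ from the outset. With this choice, the common covector is $\eta_\varepsilon=\tfrac{1}{\varepsilon}|x_\varepsilon-y_\varepsilon|^2(x_\varepsilon-y_\varepsilon)$, so $\eta_\varepsilon=0$ forces $x_\varepsilon=y_\varepsilon$, and then $D^2\theta_\varepsilon(x_\varepsilon,y_\varepsilon)=0$ because $|x-y|^4$ has vanishing Hessian on the diagonal. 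The Crandall--Ishii matrix inequality therefore collapses to
\[
\begin{pmatrix}X&0\\0&-Y\end{pmatrix}\le 0,
\]
i.e.\ $X\le 0\le Y$, which immediately yields $M(X)\le 0\le m(Y)$ and closes the chain for $\sL_0$ via Definition~\ref{D2.1}(iii), with no sup/inf-convolution needed. Your suggestion ``replace by $|x-y|^q$ with $q>2$'' is precisely the right idea; the paper just picks $q=4$ and shows in one line why it works.
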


\begin{proof}
As mentioned before, the proof follows from \cite[Theorem~2.1]{BV20}. We just
provide a sketch of the proof here. Suppose, on the contrary, that 
$M=\max_{\bar{\mathcal{O}}} (u-v)>0$. Now consider the coupling function
$$w_\varepsilon(x, y)= u(x)-v(y) -\frac{1}{4\varepsilon}|x-y|^4,\quad
x, y\in\bar\cO\,.$$
Let $M_\varepsilon$ be the maximum of $w_\varepsilon$ and 
$w_\varepsilon(x_\varepsilon, y_\varepsilon)=M_\varepsilon$.
It is then standard to show that (cf. \cite[Lemma~3.1]{CIL})
$$\lim_{\varepsilon\to 0} M_\varepsilon=M \quad \text{and}\quad
\lim_{\varepsilon\to 0}\frac{1}{4\varepsilon} |x_\varepsilon-y_\varepsilon|^4=0.$$
Thus, without any loss of generality, we may assume that 
$x_\varepsilon, y_\varepsilon\to z\in\bar\cO$ as $\varepsilon\to 0$. Otherwise, we may
choose a subsequence. Since $u-v\leq 0$ on $\partial\cO$ we must have $z\in\cO$.
Denote by 
$\eta_\varepsilon=\frac{1}{\varepsilon}|x_\varepsilon-y_\varepsilon|^2(x_\varepsilon-y_\varepsilon)$ and $\theta_\varepsilon(x, y)= \frac{1}{4\varepsilon}|x-y|^4$.
It then follows from \cite[Theorem~3.2]{CIL} that for some $X, Y\in\bS^{d\times d}$ we have $(\eta_\varepsilon , X)\in\bar{J}^{2, +}_\mathcal{O} u(x_\varepsilon)$,
$(\eta_\varepsilon, Y)\in\bar{J}^{2, -}_\mathcal{O} v(y_\varepsilon)$ and
\begin{equation}\label{EL1.2A}
\begin{pmatrix}
X & 0\\
0 & -Y
\end{pmatrix}
\leq 
D^2\theta_\varepsilon(x_\varepsilon, y_\varepsilon) + \varepsilon [D^2\theta_\varepsilon(x_\varepsilon, y_\varepsilon)]^2.
\end{equation}
In particular, we get $X\leq Y$. Moreover, if $\eta_\varepsilon=0$, we have $x_\varepsilon= y_\varepsilon$. Then from \eqref{EL1.2A} it follows that
\begin{equation}\label{EL1.2B}
\begin{pmatrix}
X & 0\\
0 & -Y
\end{pmatrix}
\leq 
\begin{pmatrix}
0 & 0\\
0 & 0
\end{pmatrix}.
\end{equation}
Note that \eqref{EL1.2B} implies that $X\leq 0\leq Y$ and therefore, $M(X)\leq 0\leq m(Y)$.
Applying the definition of superjet and subjet
on $\calG_1$ we now obtain for $\eta_\varepsilon\neq 0$
\begin{align}\label{EL1.2C}
h(x_\varepsilon)&\leq \langle \eta_\varepsilon X, \eta_\varepsilon \rangle
 + H(|\eta_\varepsilon|)-f(u(x_\varepsilon))\nonumber
\\
& \leq  \langle \eta_\varepsilon Y, \eta_\varepsilon \rangle
 + H(|\eta_\varepsilon|) - f(u(x_\varepsilon))\nonumber
\\
&\leq \langle \eta_\varepsilon Y, \eta_\varepsilon \rangle
 + H(|\eta_\varepsilon|) - f(v(y_\varepsilon))\nonumber
\\
&\leq \tilde{h}(y_\varepsilon),
\end{align}
where in the third line we use the fact $f(v(y_\varepsilon))\leq f(u(x_\varepsilon))$.
Letting $\varepsilon\to 0$ we obtain $h(z)\leq \tilde{h}(z)$ which is a contradiction 
to our hypothesis.
Similar argument also works  for $\calG_0$. This completes the proof.
\end{proof}

Following lemma is a key ingredient in the proof of
Theorem~\ref{T1.1}.
\begin{lemma}\label{L1.1}
For every $\varepsilon, K>0$ and $R\in (0, 1)$ we have $\alpha<0$ so that
\begin{itemize}
\item[(i)] under \eqref{EL1.1A}, there is a twice continuously differentiable solution
$\varphi$ satisfying
\begin{align}
((\varphi^\prime)^3)^\prime + K (\varphi^\prime)^3 - f(\varphi)+\alpha&=0\quad
\text{in}\; ({R}/{2}, R+\varepsilon_1),\quad
\varphi^\prime(R)=\alpha, \quad \varphi(R)=0\,,\label{EL1.1C}
\\
&0<\varphi<\varepsilon, \quad \varphi^\prime<0\quad \text{in} \; ({R}/{2}, R)\,,\label{EL1.1D}
\end{align}
for some $\varepsilon_1>0$.

\item[(i)] under \eqref{EL1.1B}, there is a twice continuously differentiable solution
$\varphi$ satisfying
\begin{align}
\varphi^{\prime\prime} + K \varphi^\prime - f(\varphi)+\alpha &=0\quad
\text{in}\; ({R}/{2}, R+\varepsilon_1),\quad
\varphi^\prime(R)=\alpha, \quad \varphi(R)=0\,,\label{EL1.1E}
\\
&0<\varphi<\varepsilon, \quad \varphi^\prime<0\quad \text{in} \; ({R}/{2}, R)\,,\label{EL1.1F}
\end{align}
for some $\varepsilon_1>0$.
\end{itemize}
\end{lemma}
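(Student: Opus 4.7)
The plan is to solve \eqref{EL1.1C} as an initial value problem with $\varphi(R)=0$, $\varphi'(R)=\alpha<0$ and tune $|\alpha|$ small so that $\varphi$ stays in $(0,\varepsilon)$ on $(R/2,R]$. Since \eqref{EL1.1C} rewrites as $\varphi''=[f(\varphi)-\alpha-K(\varphi')^3]/[3(\varphi')^2]$, its right-hand side is continuous in $(\varphi,\varphi')$ wherever $\varphi'\neq 0$. Standard ODE theory therefore yields a $C^2$ local solution at $r=R$, extendable in both directions so long as $\varphi$ stays bounded and $\varphi'$ stays away from $0$.

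The engine of the argument is the energy identity
\begin{equation*}
\tfrac{3}{4}\bigl((\varphi')^4\bigr)'+K(\varphi')^4=(F(\varphi))'-\alpha\varphi',
\end{equation*}
obtained by multiplying \eqref{EL1.1C} by $\varphi'$. Applying the integrating factor $\E^{4Kr/3}$, integrating from $r$ to $R$, and integrating by parts on the $(F(\varphi))'$ term express $\tfrac{3}{4}\E^{4Kr/3}(\varphi'(r))^4$ as a sum in which every term is non-negative (using $\alpha<0$ and $F,\varphi\geq 0$). In particular $(\varphi')^4\geq\alpha^4\E^{4K(R-r)/3}>0$, so $\varphi'<0$ is preserved by continuity, backward continuation proceeds, and $\varphi>0$ just to the left of $R$. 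Bounding $\varphi(s)\leq\varphi(r)$ and $F(\varphi(s))\leq F(\varphi(r))$ for $s\in[r,R]$ by monotonicity in the resulting integrals produces
\begin{equation*}
(\varphi'(r))^4\;\leq\;C_1(K,R)\alpha^4+C_2(K,R)\bigl[F(\varphi(r))+|\alpha|\varphi(r)\bigr],
\end{equation*}
valid wherever $\varphi\leq\varepsilon$, with $C_1,C_2$ depending only on $K$ and $R\leq 1$.

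Suppose $\varphi$ first reaches the level $\varepsilon$ at some $r^*<R$ (otherwise $\varphi<\varepsilon$ on all of the backward existence interval, which immediately proves the claim). Strict monotonicity of $\varphi$ in $r$ permits the change of variable $r\mapsto\varphi$ in $R-r^*=\int_{r^*}^R\D r$, producing
\begin{equation*}
R-r^*\;\geq\;\int_0^\varepsilon\frac{\D\tau}{\bigl[C_1\alpha^4+C_2(F(\tau)+|\alpha|\tau)\bigr]^{1/4}}.
\end{equation*}
As $|\alpha|\to 0$, the integrand increases pointwise to $[C_2 F(\tau)]^{-1/4}$, which is non-integrable on $(0,\varepsilon)$ by \eqref{EL1.1A}; monotone convergence gives $R-r^*\to\infty$. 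Choosing $|\alpha|$ small enough therefore forces $R-r^*>R/2$, i.e.\ $r^*<R/2$, so $0<\varphi<\varepsilon$ on $(R/2,R]$ as required.

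Forward extension past $R$ on a short interval $(R,R+\varepsilon_1)$ is immediate from local ODE theory (extending $f$ by $0$ to negative arguments keeps the ODE smooth since $\varphi'$ stays near $\alpha\neq 0$). Part (ii) is identical in structure, with energy identity $\tfrac{1}{2}((\varphi')^2)'+K(\varphi')^2=(F(\varphi))'-\alpha\varphi'$, analogous upper bound $(\varphi')^2\leq C_1\alpha^2+C_2[F(\varphi)+|\alpha|\varphi]$, and integrand exponent $1/2$ in place of $1/4$, in harmony with \eqref{EL1.1B}. The main technical point in both parts is the integrating-factor manipulation, which confines the $K$-correction so that the $|\alpha|\to 0$ limit is governed solely by the divergence condition on $F$; once that is done, monotone convergence finishes the argument.
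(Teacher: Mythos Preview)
Your proof is correct and follows essentially the same approach as the paper: both arguments derive the energy identity by multiplying \eqref{EL1.1C} by $\varphi'$, apply the integrating factor $\E^{4Kr/3}$, extract a bound of the form $(\varphi')^4\leq C_1\alpha^4+C_2[F(\varphi)+|\alpha|\varphi]$, change variables $r\mapsto\varphi$, and invoke monotone convergence with \eqref{EL1.1A} to push the first hitting time of level $\varepsilon$ below $R/2$ as $|\alpha|\to 0$. The only stylistic differences are that the paper writes out an explicit Schauder--Tychonoff fixed-point map for local existence (rather than citing Peano-type existence, as you do), and the paper establishes $\varphi'<0$ from its integral representation and then separately rules out $|\varphi'|\to\infty$, whereas your integration-by-parts step delivers both the lower bound $(\varphi')^4\geq\alpha^4\E^{4K(R-r)/3}$ (hence $\varphi'\neq 0$ and continuation) and the upper bound in one stroke.
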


\begin{proof}
We only find $\varphi$ satisfying \eqref{EL1.1C}-\eqref{EL1.1D} and the proof for
\eqref{EL1.1E}-\eqref{EL1.1F} would be analogous. The
proof of \eqref{EL1.1C}-\eqref{EL1.1D} actually follows from the argument of \cite[Lemma~2]{PSZ}. Nevertheless, we provide a proof to keep the article self-contained.
Denote by $f_\alpha = f-\alpha$  for $\alpha<0$. Also, we extend the domain $f_\alpha$ 
to $\RR$ by setting $f_\alpha(x)=-\alpha$ for $x<0$.
First we note that existence of a local solution of \eqref{EL1.1C} follows from 
the Schauder-Tychonoff fixed point theorem. In fact, for any $(\xi, \gamma)\in \RR\times\RR$,
consider the map $T:\cC[t_0-\beta, t_0] \to \cC[t_0-\beta, t_0]$ defined as
\begin{equation}\label{EL1.1G}
(Tg)(t) = \xi - \int_t^{t_0} \left(e^{K(t_0-s)}\gamma^3 - 
\int_{s}^{t_0} e^{K(\zeta-s)} f_\alpha(g(\zeta))\, \D\zeta\right)^{\nicefrac{1}{3}} \D{s}\,.
\end{equation}
Now given positive $M_1, M_2$ we can find $\beta>0$ so that
for any $|\xi|\leq M_1$, $|\gamma|\leq M_2$, $T$ satisfies
the condition of Schauder-Tychonoff fixed point theorem and hence, it has a fixed point. Set $\xi=0, \gamma=\alpha$ and find a fixed point $\varphi$ of $T$ in $[R-\beta/2, R+\beta/2]$.
Next, setting $t_0=R-\beta/2, \xi=\varphi(t_0), \gamma=\varphi'(t_0)$, we can
extend $\varphi$ to $(R-\beta, R+\beta/2)$ provided $|\varphi(t_0)|\leq M_1$
and $|\varphi'(t_0)|\leq M_2$. Let $(R_0, R+\beta/2)$ be the maximal interval on
which $\varphi$ can be defined by repeating the above scheme. It is evident that 
$\varphi$ is continuously differentiable and 
$$\varphi'(t) = \left(e^{K(R-t)}\alpha^3 - 
\int_{t}^{R} e^{K(\zeta-s)} f_\alpha(\varphi(\zeta))\, \D\zeta\right)^{\nicefrac{1}{3}}<0.
$$
Thus $\varphi$ is strictly deceasing and $\varphi^\prime< 0$ in $(R_0, R+\beta/2)$.
It is then easily seen from \eqref{EL1.1G} that $\varphi$ is twice continuously differentiable and satisfies \eqref{EL1.1C} in $(R_0, R+\beta/2)$. Thus $\varphi$
satisfies \eqref{EL1.1C} in $(R_0, R+\beta/2)$. Let $R_1\in(R, R_0]$ be the maximal
number so that $\varphi$ satisfies \eqref{EL1.1C}-\eqref{EL1.1D} in $(R_1, R+\beta/2)$.
To complete the proof we only need show that if we choose $|\alpha|$ small enough then we can have $R_1\leq R/2$. Suppose, on the contrary, that $R_1> R/2$. Given the maximality of $R_1$, one of the following to possibilities hold:
\begin{equation}\label{EL1.1H}
\mathrm{(a)}\, \lim_{t\to R_1+} \varphi(t)=\varepsilon,
\quad \mathrm{(b)}\, \lim_{t\to R_1+} |\varphi^\prime(t)|> M_2.
\end{equation}
It is easily seen from \eqref{EL1.1C} that $\varphi^{\prime\prime}>0$ in $(R_1, R)$ and
thus $\varphi^\prime$ is increasing.
Letting 
$$F_\alpha(t) =\int_0^t f_\alpha(s) \D{s},$$
and multiplying \eqref{EL1.1C} by $\varphi^\prime$ we see that
$$\left(e^{\tilde{K} t} (\varphi^\prime)^4\right)^\prime - \frac{4}{3}e^{\tilde{K} t}(F_\alpha(\varphi))^\prime=0,$$
where $\tilde{K}=\frac{4}{3}K$. Since 
$(F_\alpha(\varphi))^\prime=f_\alpha(\varphi) \varphi^\prime<0$, we get 
$$e^{\tilde{K} R} \alpha^4- e^{\tilde{K} t} (\varphi^\prime(t))^4 +
\frac{4}{3}e^{\tilde{K} R}F_\alpha(\varphi(t))\geq 0\,.$$
This of course, gives us
\begin{equation}\label{EL1.1I}
(\varphi^\prime(t))^4\leq e^{\frac{\tilde{K}R}{2}} \alpha^4 + 
e^{\frac{\tilde{K}R}{2}}\frac{4}{3} F_\alpha(\varphi(t)), \quad t\in (R_1, R)\,.
\end{equation}
Now, without any loss of generality, we may take $\varepsilon\in (0, 1)$. 
Therefore, at the beginning, if we choose $M_2$ large enough to satisfy
$$ \left[e^{\frac{\tilde{K}R}{2}} \alpha^4 + 
\frac{4}{3} e^{\frac{\tilde{K}R}{2}} \max_{s\in [0, 1]}F_\alpha(s)\right]^{\nicefrac{1}{4}}< M_2, $$
 possibility (b) in 
\eqref{EL1.1H} can not occur before (a). In other words, if we have $R_1>R/2$, then 
(a) is the only possibility. Thus it is enough to consider (a).
Restrict $\varepsilon<\delta$ where $\delta$ is given by
\eqref{EL1.1A}. We can further restrict $\varepsilon$ to satisfy
$F_\alpha(\varepsilon)<\delta$. 
Choose $\alpha$ small enough so that 
$\widehat{\varepsilon}= F_\alpha^{-1}(\alpha^4)<\varepsilon$. Then we can find
$\hat{R}\in (R_1, R)$ satisfying $\varphi(\hat{R})=\hat{\varepsilon}$ and
$\varphi(t)\geq \hat\varepsilon$ in $(R_1, \hat{R})$. Then
$$F_\alpha(\varphi(t))\geq F_\alpha (\hat\varepsilon)=\alpha^4\quad \text{in}
\; (R_1, \hat{R}).$$
Using \eqref{EL1.1I} we then obtain
$$ -\varphi^\prime(t) \leq \left(\frac{7}{3}\right)^\frac{1}{4}\,
e^{\frac{\tilde{K} R}{8}} [F_\alpha(\varphi(t))]^\frac{1}{4}
\quad \text{in}\; (R_1, \hat{R}).$$
Integrating both sides we have
$$
\int^{\hat{R}}_{R_1} \frac{-\varphi^\prime(t)}{[F_\alpha(\varphi(t))]^\frac{1}{4}}\, \D{t}
\leq \, \left(\frac{7}{3}\right)^\frac{1}{4}\,
e^{\frac{\tilde{K} R}{8}} \frac{R}{2},$$
which in turn, gives
\begin{equation}\label{AB3}
\int^{\varepsilon}_{\hat\varepsilon} \frac{1}{[F(t)-\alpha t]^\frac{1}{4}}\, \D{t}=
\int^{\varepsilon}_{\hat\varepsilon} \frac{1}{[F_\alpha(t)]^\frac{1}{4}}\, \D{t}
\leq \, \left(\frac{7}{3}\right)^\frac{1}{4}\,
e^{\frac{\tilde{K} R}{8}} \frac{R}{2}.
\end{equation}
Since $\alpha\to 0$ implies $\hat{\varepsilon}\to 0$, using monotone convergence 
theorem we note that
$$\int^{\varepsilon}_{\hat\varepsilon} \frac{1}{[F(t)-\alpha t]^\frac{1}{4}}\, \D{t}
\to \int^{\varepsilon}_{0} \frac{1}{[F(t)]^\frac{1}{4}}\, \D{t}, \quad \text{as}\; \alpha\to 0.$$
But the limit is $\infty$ by \eqref{EL1.1A}. This is a contradiction to \eqref{AB3}.
Hence (a) in \eqref{EL1.1H} is also not possible for small enough $\alpha$.
Thus $R_1\leq R/2$ which completes the proof.
\end{proof}

Now we are ready to prove our main results. We start with the proof of Theorem~\ref{T1.1}.
%%%%%%%%%%%%%%%%%%%%%%%%%%%%%%%%%%%%%%%%%%%%%%%%%%%%%%%%%%%%%%%%%%%%%%%%%%%%%%%%%%%%
\begin{proof}[{\bf Proof of Theorem~\ref{T1.1}}]
We only provide a proof for (a) and the proof for (b) would be analogous.
Suppose, on the contrary, that the set $\{x\in\cO\; :\; v(x)=0\}$ is non-empty. Then
since $v\neq 0$, we can find a ball $B(x_0, R)\subset \cO$ such that
$v>0$ in $B(x_0, R)$ and 
$\overline{B(x_0, R)}\cap\{x\in\cO\; :\; v(x)=0\}\neq\emptyset$. Without loss of generality, assume that $R\in (0, 1)$. Choose $\varepsilon< \min_{\overline{B}(x_0, R/2)} v$.
Using Lemma~\ref{L1.1} we now find a twice continuously differentiable function
$\varphi$ satisfying
\begin{align}
(\varphi^\prime)^2\varphi^{\prime\prime} + K (\varphi^\prime)^3 - f(\varphi)+\alpha&=0\quad
\text{in}\; ({R}/{2}, R+\varepsilon_1),\quad
\varphi^\prime(R)=\alpha, \quad \varphi(R)=0\,,\label{ET2.1C}
\\
&0<\varphi<\varepsilon, \quad \varphi^\prime<0\quad \text{in} \; ({R}/{2}, R)\,,\label{ET2.1D}
\end{align}
for some $\varepsilon_1>0$ and $\alpha<0$. Let $u(x)= \varphi(|x-x_0|)$. Then in $B^c(x_0, R/2)$
we have
\begin{align*}
Du(x)&= \frac{x-x_0}{|x-x_0|}\varphi^\prime(|x-x_0|)
\\
\partial_{x_ix_j}u &= \frac{(x-x_0)_i(x-x_0)_j}{|x-x_0|^2}\varphi^{\prime\prime}(|x-x_0|)+ \varphi^\prime(|x-x_0|)\left(\frac{\delta_{ij}}{|x-x_0|}-\frac{(x-x_0)_i(x-x_0)_j}{|x-x_0|^3}\right).
\end{align*}
Using \eqref{ET2.1C}-\eqref{ET2.1D} we then have
\begin{align}\label{ET2.1E}
\sL_1 u - K |Du|^3 - f(u)
& =
(\varphi^\prime)^2(|x-x_0|)\varphi^{\prime\prime}(|x-x_0|)
- K |\varphi^{\prime}|^3 - f(\varphi)\nonumber
\\
&= (\varphi^\prime)^2(|x-x_0|)\varphi^{\prime\prime}(|x-x_0|)
+ K (\varphi^{\prime})^3 - f(\varphi)=-\alpha>0.
\end{align}
Using Lemma~\ref{L2.1} we then have $u\leq v$ for $R/2\leq|x-x_0|\leq R$. Also,
note that for $R\leq |x-x_0|\leq R+\varepsilon_1$, $u(x)\leq 0$. Thus, $u$ touches
$v$ from below at some point, say $z$, on the sphere $|x-x_0|=R$. Applying the definition of viscosity solution we must have
$$\sL_1 u(z) - K |Du(z)|^3 - f(u(z))=\sL_1 u(z) - K |Du(z)| - f(v(z))\leq 0,$$
which contradicts \eqref{ET2.1E}. Thus $\{x\in\cO\; :\; v(x)=0\}=\emptyset$, completing
the proof.
\end{proof}

%%%%%%%%%%%%%%%%%%%%%%%%%%%%%%%%%%%%%%%%%%%%%%%%%%%%%%%%%%%%%%%%%%%%%%%%%%%%%%%%%%%%%%
Next we prove Theorem~\ref{T1.2}.

\begin{proof}[{\bf Proof of Theorem~\ref{T1.2}}]
First we consider (a). We start by assuming that $\lim_{|x|\to\infty} u(x)=0$ and
establish a compact support principle.
For this proof we borrow the ideas from \cite{FMQ}. The main idea is to construct
a nonnegative super-solution to \eqref{EqnA} with a compact support. To do so we again use the ODE technique.
Note that by monotonicity of $f$ we have $F(a t)\leq a F(t)$ for every $a\in [0, 1]$, 
since
$$F(a t)=\int_0^{at} f(s) \D{s}=a\int_0^t f(as) \D{s}\leq a\int_0^t f(s) \D{s}=aF(t).$$
Thus, by \eqref{ET2.2A0}, we have
$$\int_0^1 \frac{1}{\Gamma^{-1}(4^{-1}F(s))}\, \D{s}<\infty.$$
Define a continuous function $\varphi$ by
$$t=\int_0^{\varphi(t)} \frac{1}{\Gamma^{-1}(4^{-1}F(s))}\, \D{s}\,. $$
Note that $\varphi$ is strictly increasing with $\varphi(0)=0$. Also,
\begin{equation}\label{ET2.2C0}
1=\frac{\varphi'(t)}{\Gamma^{-1}(4^{-1}F(\varphi(t)))}
\quad \Rightarrow\quad \Gamma(\varphi'(t))=\frac{1}{4} F(\varphi(t)).
\end{equation}
Since $\Gamma, F, \varphi$ are strictly increasing, we have $\varphi'$ strictly increasing and $\varphi'(0)=0$. Hence for $t\leq 1$ we have
$\varphi(t)=\int_0^t \varphi'(s)\, \D{s}\leq \varphi'(t)$.
It is also evident from \eqref{ET2.2C0} that 
$\varphi'$ is continuously differentiable for $t>0$. Therefore, using \eqref{ET2.2C0}
and the fact $G$ is nondecreasing, we obtain for $t\in [0, 1]$ that
\begin{align*}
G(\varphi'(t))\varphi'(t)\leq \int_{\varphi'(t)}^{2\varphi'(t)} G(\varphi'(s))\, \D{s}
&\leq \Gamma(\varphi'(t))=\frac{1}{4} F(\varphi(t))\leq \frac{1}{4} f(\varphi(t))\varphi(t)\leq \frac{1}{4} f(\varphi(t))\varphi'(t),
\end{align*}
which in turn, implies
\begin{equation}\label{ET2.2D0}
G(\varphi')\leq \frac{1}{4} f(\varphi(t))\quad \text{for all $t>0$ small}.
\end{equation}
Since $\varphi^{\prime\prime}\geq 0$ for $t>0$, differentiating \eqref{ET2.2C0} we have
$$(\varphi'(t))^3\varphi^{\prime\prime}(t) 
\leq (\Gamma(\varphi(t))'=\frac{1}{4} f(\varphi(t))\varphi'(t),$$
giving us
\begin{equation}\label{ET2.2E0}
(\varphi'(t))^2\varphi^{\prime\prime}(t)\leq \frac{1}{4} f(\varphi(t))
\quad \text{for all $t>0$ small}.
\end{equation}
Combining \eqref{ET2.2D0}-\eqref{ET2.2E0} we find $r_\circ>0$ such that 
\begin{equation}\label{ET2.2F0}
(\varphi'(t))^2\varphi^{\prime\prime}(t) + G(\varphi'(t))-2^{-1}f(\varphi(t))
\leq 0 \quad \text{for all}\; t\in (0, r_\circ),
\end{equation}
and $\varphi(0)=\varphi'(0)=0$. We extend $\varphi$ on $(-\infty, 0]$ by setting
$\varphi(t)=0$ for $t\leq 0$. It is easily seen that $\varphi$ is continuously
differentiable in $(-\infty, r_\circ)$. Now for any $R>0$, we let 
$v(x)=\varphi(R+r_\circ-|x|)$ for $|x|\geq R$. Using \eqref{ET2.2F0} and the calculations in \eqref{ET2.1E}
we see that for $ R<|x|< R+r_\circ$ we have
\begin{align}\label{ET2.2G0}
\sL_1 v + G(|Dv|) - 2^{-1}f(v)
 =
(\varphi^\prime)^2(R+r_\circ-|x|)\varphi^{\prime\prime}(R+r_\circ-|x|)
+ G(\varphi^{\prime}) - 2^{-1}f(\varphi)
\leq 0.
\end{align}
We claim that 
\begin{equation}\label{ET2.2H0}
\sL_1 v + G(|Dv|) - 2^{-1}f(v)\leq 0 \quad \text{for}\; |x|> R\,,
\end{equation}
in viscosity sense. When $ R<|x|< R+r_\circ$, \eqref{ET2.2H0} follows
from \eqref{ET2.2G0}. Again, for $|x|> R+r_\circ$, \eqref{ET2.2H0} is evident  as
$v$ is identically zero there.
So we consider the case where $|x|= R+r_\circ$. Let $\chi$ be a $\cC^2$ test with
$\chi\prec_{x} v$. Since $v$ is $\cC^1$ we have $D\chi(x)=Dv(x)=0$. Hence
$$\sL_1\chi(x) + G (|D\chi(x)|)- 2^{-1}f(v(x))= 0,$$
implying $v$ is supersolution.  This 
gives us \eqref{ET2.2H0}.

Now we complete the proof. Let $\beta=\varphi(r_\circ)>0$. Since $u(|x|)\to 0$
as $|x|\to\infty$, we find $R$ so that $u(x)<\beta $ for $|x|\geq R$.
Define 
$v_\epsilon(x)=v(x) + \epsilon$. Since $f$ is non-decreasing, we get from \eqref{ET2.2H0} that
$$\sL_1 v_\epsilon + G (|Dv_\epsilon|)-f(v_\epsilon) 
\leq 2^{-1}f(v)-f(v_\epsilon)\leq -2^{-1}f(v_\epsilon)\,,
\quad |x|> R.$$
Now we choose $R_\epsilon>R+r_\circ$ large enough so that $u(x)<\epsilon$
for $|x|\geq R_\epsilon$. Applying Lemma~\ref{L2.1} in $\{R<|x|<
R_\epsilon\}$ we obtain $u\leq v_\epsilon=v+\epsilon$ for $|x|\geq R$.
Now let $\epsilon\to 0$ to conclude that $u\leq v$ for $|x|\geq R$
which implies $u(x)=0$ for $|x|\geq R+r_\circ$. This completes the proof of
(a) under the assumption $\lim_{|x|\to\infty} u(x)=0$.

Now consider a bounded solution $u$ to
\eqref{EqnA} and we show that $\lim_{|x|\to\infty} u(x)=0$. Then the conclusion 
of (a) follows from the first part of the proof.
Suppose, on the contrary, that $\limsup_{|x|\to\infty} u(x)=M>0$ and $f(M)=3\kappa$.
Given $x_0\in\RN$, we define $\xi(x)=r^{-2}|x-x_0|^2-1$. Then 
$\xi<0$ in $B(x_0, r)$ and vanishes at the boundary $\partial B(x_0, r)$. Also,
$$\sL_1\xi + G(|D\xi|)= 8 r^{-6}|x-x_0|^2 + G(2r^{-2}|x-x_0|)<\kappa
\quad \text{in}\; B(x_0, r),$$
provided $r$ is large. Now choose a point $x_0\in\cO$ such that $B(x_0, r)\subset\cO$
and $u(x_0)> M-\epsilon$ where $\epsilon\in (0, 1/2)$ is small enough to satisfy 
$f(M-\epsilon)>2\kappa$.  We may also assume that $\sup_{\overline{B(x_0, r)}} u< M+ 1/2$.
 Now we translate $\xi$ so that it touches $u$ in $B(x_0, r)$ from above.
To do so, we define
$$\beta=\inf\{\gamma\in[M-2\epsilon, M+2]\; :\; \gamma+\xi > u\; \; \text{in}\; B(x_0, r)\}.$$
Clearly, $\beta\geq M+1-\epsilon$ since $M+1-\epsilon+\xi(x_0)=M-\epsilon< u(x_0)$.
Hence $v(x):=\beta+\xi(x)=\beta\geq M+1-\epsilon> u(x)$ on $\partial B(x_0, r)$.
$u$ being upper-semicontinuous, $v$ must touch $u$ inside $B(x_0, r)$, say at a point $z\in B(x_0, r)$.
Then applying the definition of viscosity solution we obtain
$$f(v(z))=f(u(z))\leq \sL_1 v + G(|Dv|)\leq \kappa.$$
Since $v(z)\geq M+1-\epsilon+\xi(z)\geq M-\epsilon$, we get from above that $\kappa\geq f(v(z))\geq f(M-\epsilon)>2\kappa$ which is a contradiction. Thus we must have $\lim_{|x|\to\infty} u(x)=0$. Hence the proof.

Next we consider (b). The proof is similar to (a). Following a similar argument 
of \eqref{ET2.2F0} we would obtain
\begin{equation*}
\varphi^{\prime\prime}(t) + G(\varphi'(t))-2^{-1}f(\varphi(t))
\leq 0 \quad \text{for}\; t\in (0, r_\circ)\,.
\end{equation*}
It can be easily seen from this equation that 
$\lim_{t\to 0+}\varphi^{\prime\prime}(t)=\varphi^{\prime\prime}(0)=0$. Thus the extension of $\varphi$ is twice continuously differentiable in $(-\infty, r_\circ)$.
Now we can follow the arguments of (a) to complete the proof.
\end{proof}

%%%%%%%%%%%%%%%%%%%%%%%%%%%%%%%%%%%%%%%%%%%%%%%%%%%%%%%%%%%%%%%%%%%%%%%%%%%%%%%%
Finally, we prove Theorem~\ref{T1.3}.
\begin{proof}[{\bf Proof of Theorem~\ref{T1.3}}]
As before, we only prove (a) and the proof for (b) would be analogous.
Let 
$$R= \int_0^1 \frac{1}{[H(s)]^{\nicefrac{1}{4}}}\, \D{s},$$
where $H(t)=\int_0^t h(s) \D{s}$ and $h(s)=4\kappa f(s)$ for some $\kappa>0$ to 
be chosen later.
We define a continuous function $\varphi:[0, R]\to [0, \infty)$ by
$$r=\int_{\varphi(r)}^1 \frac{1}{[H(s)]^{\nicefrac{1}{4}}}\, \D{s}.$$
 It is evident that $\varphi$ takes values in $[0, 1]$ and is strictly decreasing.
Differentiating  we obtain
$$\frac{-\varphi'(r)}{[H(\varphi(r))]^{\nicefrac{1}{4}}}=1 \quad \text{for}\; 0<r< R.$$
Since $\varphi'\neq 0$ in $(0, R)$, differentiating once again we get
\begin{equation}\label{ET2.2A}
(\varphi')^2 \varphi^{\prime\prime} - \frac{1}{4}h(\varphi(r))=0\quad \text{in}\; (0, R).
\end{equation}
Since $\varphi(R)=\varphi^\prime(R)=0$, from \eqref{ET2.2A} we have
\begin{equation}\label{ET2.2B}
\varphi(r)=\int_r^R \left[\int_t^R \frac{3}{4} h(\varphi(s)) \D{s}\right]^{\nicefrac{1}{3}}\, \D{t}\quad r\in (0, R]\,.
\end{equation}
Choose $\delta>0$ small enough so that $8e^{-3K\delta}\geq 1$, where $K$ is same as in Theorem~\ref{T1.3}. Now consider a map $T:\cC[R-\delta, R]\to \cC[R-\delta, R]$
given by
\begin{equation}\label{ET2.2C}
(Tg)(t) = \int_t^{R} \left[\int_{s}^{R} 6 e^{3K(s-\zeta)} h(g(\zeta))\, \D\zeta\right]^{\nicefrac{1}{3}} \D{s}\,.
\end{equation}
It is easily seen that $T$ is a continuous function. Also, if $g\geq \varphi$, then
since $h$ is non-decreasing using \eqref{ET2.2B} we get
$$(Tg)(t)\geq \int_t^{R} \left[\int_{s}^{R} 6 e^{-3K\delta} h(\varphi(\zeta))\, \D\zeta\right]^{\nicefrac{1}{3}} \D{s}
\geq \int_t^{R} \left[\int_{s}^{R} 3 h(\varphi(\zeta))\, \D\zeta\right]^{\nicefrac{1}{3}} \D{s}=\varphi(t)\,.$$
Denote by $M=\sup_{s\in [0, 1]} h(s)$. Then, restricting $\delta$ small enough
we see that if $\sup_{t\in [R-\delta, R]}|g(t)|\leq 1$ then
\begin{align*}
|(Tg)(t)|\leq (6M)^{\frac{1}{3}}\int_t^{R} (R-s )^{\nicefrac{1}{3}} \D{s}
= \frac{3}{4}(6M)^{\frac{1}{3}} (R-t)^{\frac{4}{3}}
=\frac{3}{4}(6M)^{\frac{1}{3}} \delta^{\frac{4}{3}}\leq 1.
\end{align*}
Furthermore, 
$$|(Tg)(t_1)-(Tg)(t_2)|\leq (6M\delta)^{\frac{1}{3}}|t_1-t_2|.$$
Thus, letting
$$\cA=\{g\in \cC[R-\delta, R]\; :\; g\geq \varphi,\; \max_{[R-\delta, R]}|g|\leq 1,
\; |g(t_1)-g(t_2)|\leq (6M\delta)^{\frac{1}{3}}|t_1-t_2|\quad\forall\, t_1, t_2\in [R-\delta, R]\},$$
we note that $T:\cA\to\cA$. Therefore, by Schauder fixed point theorem, 
$T$ has a fixed point $\psi$ in $\cA$.
In particular, we get from \eqref{ET2.2C}
$$\psi(t) = \int_t^{R} \left[\int_{s}^{R} 6 e^{3K(s-\zeta)} h(\psi(\zeta))\, \D\zeta\right]^{\nicefrac{1}{3}} \D{s}\,.
$$
This of course, implies $\psi(R)=0$. Differentiating we obtain
$$-(\psi^\prime(t))^3 = \int_{t}^{R} 6 e^{3K(t-\zeta)} h(\psi(\zeta))\, \D\zeta
\quad t\in (R-\delta, R).$$
Thus $D_{-}\psi(R)=0$ and differentiating the above equation we obtain
\begin{equation}\label{ET2.2D}
(\psi^\prime(t))^2\psi^{\prime\prime}(t)-K(\psi^\prime(t))^3 - 2h(\psi(t))=0
\quad \text{for}\; t\in (R-\delta, R).
\end{equation}
Extend $\psi$ in $(R, \infty)$ be setting $\psi(t)=0$ for $t\geq R$. Note that
$\psi$ is continuously differentiable in $(R-\delta, \infty)$ and $\psi'<0$ in
$(R-\delta, R)$.

Now we let $\kappa=\frac{1}{8}$. Let $r_\circ=R-\delta-1$ and define
 $v(x)=\psi(|x|+r_\circ)$. Using \eqref{ET2.2D} and the calculations in \eqref{ET2.1E}
we see that for $ 1<|x|< 1+\delta$ we have
\begin{align}\label{ET2.2E}
\sL_1 v + K |Dv|^3 - 2f(v)
& =
(\psi^\prime)^2(|x|+r_\circ)\psi^{\prime\prime}(|x|+r_\circ)
+ K |\psi^{\prime}|^3 - f(\psi)\nonumber
\\
&= (\psi^\prime)^2(|x|+r_\circ)\psi^{\prime\prime}(|x|+r_\circ)
- K (\psi^{\prime})^3 - 2h(\psi)=0.
\end{align}
We claim that 
\begin{equation}\label{ET2.2F}
\sL_1 v + K |Dv|^3 - f(v)=0 \quad \text{for}\; |x|> 1\,,
\end{equation}
in viscosity sense. When $ 1<|x|< 1+ \delta$, \eqref{ET2.2F} follows
from \eqref{ET2.2E}. Again, for $|x|> 1+ \delta$, \eqref{ET2.2F} is evident.
So we consider the case where $|x|= 1+ \delta$. Let $\chi$ be a $\cC^2$ test with
$v\prec_{x}\chi$. Since $v$ is $\cC^1$ we have $D\chi(x)=Dv(x)=0$. Hence
$$\sL_1\chi(x) + K |D\chi(x)|^3-f(v(x))=0,$$
implying $v$ is subsolution. Similarly, we show that $v$ is a supersolution. This 
gives us \eqref{ET2.2F} and this completes the proof.
\end{proof}

%%%%%%%%%%%%%%%%%%%%%%%%%%%%%%%%%%%%%%%%%%%%%%%%%%%%%%%%%%%%%%%%%%%%%%%%%%%%%
\subsection*{Acknowledgement}

The author is grateful to the referee for his/her careful reading and comments.
The research of Anup Biswas was supported in part by DST-SERB grants EMR/2016/004810 and MTR/2018/000028 and a SwarnaJayanti fellowship.

\end{document}